\numberwithin{equation}{section}
\newtheorem{thm}{Theorem}[section]
\newtheorem{lem}[thm]{Lemma}
\newtheorem{conj}[thm]{Conjecture}
\theoremstyle{definition}
\newcommand\cA{{\mathcal A}}
\newcommand\cB{{\mathcal B}}
\newcommand\cC{{\mathcal C}}
\newcommand\cD{{\mathcal D}}
\newcommand{\be}{\begin{equation}}
\newcommand{\ee}{\end{equation}}
\newcommand{\bs}{\begin{split}}
\newcommand{\es}{\end{split}}
\begin{document}
\title{A bijectional attack on the Razumov-Stroganov conjecture}
\date{\today}
\author{Arvind Ayyer\\
\small Department of Physics\\
\small 136 Frelinghuysen Rd\\
\small Piscataway, NJ 08854.\\
\small \texttt{ayyer@physics.rutgers.edu}\\
\\
Doron Zeilberger\\
\small Department of Mathematics\\
\small 110 Frelinghuysen Rd\\
\small Piscataway, NJ 08854.\\
\small \texttt{zeilberg@math.rutgers.edu}\\
}

\maketitle

\begin{abstract}
We attempt to prove the Razumov-Stroganov conjecture using a
bijectional approach. We have been unsuccessful but we believe the
techniques we present can be used to prove the conjecture.
\end{abstract}

\section{Introduction}
Ever since the discovery of alternating sign matrices (ASM), the
conjecture on the number of such matrices \cite{mrr} and the proof of
this conjecture in \cite{z1} (followed by a shorter proof
in \cite{k}), a number of interesting structures
have been found which are counted by the ASM numbers \cite{p}. One of
these structures is that of fully packed loops which has given rise to
another beautiful conjecture, formulated in \cite{ngb,rs}, and now popularly
known as the Razumov-Stroganov conjecture. 

This conjecture, motivated by some exactly solvable models in
statistical physics, is about seven years old and already several
connections have been found to other combinatorial objects. The
literature on this subject is already very large and we will not
be able to survey the complete literature. Readers are referred to the
papers \cite{deg,df1,zj} for more references.

The proof has been found only in some restricted cases \cite{zj}. This
has also motivated a bunch of refined conjectures, starting with
\cite{df1}. All of these proofs and refined conjectures generally
involve connections to other combinatorial objects and might be a red
herring. In this short letter, we suggest a completely self-contained
approach which is more combinatorial. This might lead to a proof of the
Razumov-Stroganov conjecture.\footnote{Yet another conjecture on a
  subject where all papers seem to be conjectures.}

The paper is organized as follows. In Section~\ref{sec:defn}, we give
the basic definitions and set the notation. In Section~\ref{sec:comb},
we restate the conjecture combinatorially, and in
Section~\ref{sec:altpath}, we propose a mechanism for tackling this
restatement. Lastly, in Section~\ref{sec:exp}, we mention some
computer experiments which have been successful for smaller sizes but
have not yielded the proposed bijection.

\section{Definitions} \label{sec:defn}
Fully Packed Loops (FPLs) of size $n$ are configurations of lattice
paths which are drawn in a square lattice of dimensions $n \times n$
as follows.  One assigns the labels $1,\dots,2n$ on the $4n$ endpoints of
the lattice alternatively. Subsequently, one connects the endpoint
labels pairwise via paths on the lattice so that there are no
crossings. Note that one is allowed to fill in
loops also. The important point is that these lattice paths have to
cover the entire square lattice. It turns out that FPLs are in bijection
with  Alternating Sign Matrices (ASMs) and therefore the number of
FPLs is given by the ASM numbers $A_n$ \cite{z1,k},
\be
A_n = \prod_{i=0}^{n-1} \frac{(3i+1)!}{(n+i)!}.
\ee
Let us denote the set of FPLs of size $n$ as $F_n$.
The number of ways of connecting $2n$ endpoints on a circle is the
Catalan number $C_n$. We can count the number of FPLs according to
the connectivity of its endpoints. Let the
connectivities be labelled $\pi_j$, $j=1,\dots,C_n$. Then we let
$A(\pi_j)$ be the number of FPLs with connectivity $\pi_j$.

On the set of endpoints, one can define the operators $e_i$ for $i \in
[1,2n]$ in the following way. Suppose in a particular connectivity $\pi$,
$i$ joins $j$ and $i+1$ joins $k$. Then $e_i(\pi)$ is the new
connectivity obtained by connecting $i$ to $i+1$ (cyclically) 
and $j$ to $k$. It is
easy to see that this is a valid connectivity also.
These $e_i$ satisfy the defining relations of the Temperley-Lieb
algebra,
\be
\begin{split}
e_i^2 &= e_i, \\
[e_i,e_j] &= 0, \qquad \text{for } |i-j| \geq 2, \\
e_i e_{i \pm 1} e_i &= e_i.
\end{split}
\ee

Consider the vector space whose basis consists of all
connectivities $\pi_j$, $j=1,\dots,C_n$. Since the operators $e_i$,
$i=1,\dots,2n$ act 
on connectivities, one can 
construct a matrix for each $e_i$ in that basis. Since every 
$e_i$ takes any basis vector to another single basis vector, the
matrix must have a single one in every column, the rest being
zeros. This ensures that the vector $w=(1,\dots,1)$ satisfies $w e_i = w$ for
all $i$.
Since $e_i$ is a nonnegative integer matrix, it has a largest
eigenvalue by the Perron-Frobenius theorem, the corresponding
eigenvector being positive. Thus, $w$ is the left Perron-Frobenius
eigenvector for each $e_i$.

Define the
``Hamiltonian'' matrix,
\be
H = \sum_{i=1}^{2n} e_i.
\ee
$w$ is also the left eigenvector of $H$ with the largest eigenvalue
$2n$. Obviously, $H$ also has a single right eigenvector with the same
eigenvalue. Call it $\Psi$. We can choose $\Psi$ to have positive
integer entries, and can expand it in our original basis. Then the
Razumov-Stroganov conjecture states that 
\be \label{conjrs}
\Psi = \sum_{j=1}^{C_n} A(\pi_j) \pi_j.
\ee


\section{A Combinatorial Restatement of the Conjecture} \label{sec:comb}
To restate the conjecture we rewrite $H\Psi$,
\be
\begin{split}
\left( \sum_{i=1}^{2n} e_i \right) \left( \sum_{j=1}^{C_n} A(\pi_j)
\pi_j \right) 
&= \sum_{j=1}^{C_n} A(\pi_j) \left( \sum_{i=1}^{2n} e_i \pi_j \right), \\
&= \sum_{j=1}^{C_n} A(\pi_j) \left( \sum_{\substack{i=1,k=1 \\ e_i \pi_j =
    \pi_k}}^{2n,C_n} \pi_k \right), \\
&= \sum_{k=1}^{C_n} \pi_k \left( \sum_{\substack{i=1,j=1 \\e_i
    \pi_j = \pi_k}}^{2n,C_n} A(\pi_j) \right).
\end{split}
\ee
Since $H \Psi = 2n\Psi$ and the $\pi_k$'s are independent vectors, we
must have
\be \label{conjbij}
\sum_{\substack{i=1,j=1 \\e_i
    \pi_j = \pi_k}}^{2n,C_n} A(\pi_j) = 2n A(\pi_k), \qquad \text{for
  all } k=1,\dots,C_n.
\ee
This restatement has the advantage that it involves only the number of
FPLs for a given connectivity and in that sense, is more
combinatorial. There is no mention of any matrices, eigenvalues and
algebras. To the best of our knowledge, this is the first time the
Razumov-Stroganov conjecture has been interpreted this way.

We comment that another way to  think of \eqref{conjbij} is that the
map $\pi \to A(\pi)$ is a ``harmonic function'' on the
graph of connectivities, where $\pi$ has the $2n$ neighbors
$e_i \pi$ ($i=1, \dots, 2n$).

\begin{figure}[ht!]
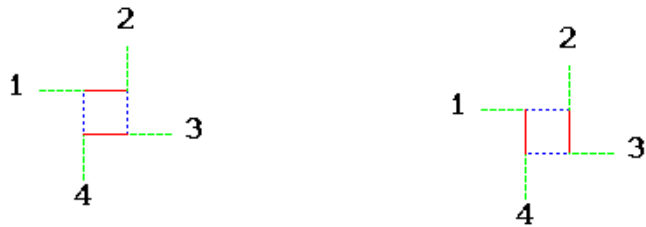

\hfil \includegraphics[width=3.5cm]{2.1v2.eps} 
\hfil \includegraphics[width=3.5cm]{2.2v2.eps}
\caption{The two FPLs for $n=2$. The red lines represent the paths.}
\label{fig:fpl2}
\end{figure}

Let us consider a simple example. 
When $n=2$, we have two
connectivities on four points, 
$\pi_1 = (12)(34), \, \pi_2 = (14)(23)$ with $A(\pi_1) = A(\pi_2) =
1$. These FPLs are shown in Figure~\ref{fig:fpl2}. The operators $e_i$
act in a straightforward manner: 
\be
\begin{split}
e_1(\pi_1) = e_1(\pi_2) = e_3(\pi_1) = e_3(\pi_2) = \pi_1, \\
e_2(\pi_1) = e_2(\pi_2) = e_4(\pi_1) = e_4(\pi_2) = \pi_1.
\end{split}
\ee
For $k=1$, the left hand side is $2A(\pi_1)+2A(\pi_2)$ (when $i=1,3$),
which is also equal to the right hand side, $4A(\pi_1)$. Similarly for $k=2$.

\eqref{conjbij} can be interpreted as a statement of the equality of
the cardinality of two sets in the following way. Let
\be
\begin{split}
\cA_k &= \{ (i,f) | i \in [1,2n], f \in F_n, \pi(f) = \pi_k \}, \\
\cB_k &= \{ (j,g) | j \in [1,2n], g \in F_n, e_j(\pi(g)) = \pi_k \},
\end{split}
\ee
be two subsets of the product set $[1,2n] \times F$. Then the restatement
\eqref{conjbij} is equivalent to the statement
\be \label{setbij}
|\cA_k | = |\cB_k| \qquad \text{for all } k = 1,\dots,C_n.
\ee

\section{Alternating Paths} \label{sec:altpath}
The equation \eqref{setbij} is a conjectured equinumeracy where the index
varies among all link patterns. We conjecture that something stronger
is true. Namely, a bijection in which the index varies among all FPLs.
We suggest that is possible, given an FPL
$f$ of size $n$ and an integer $i$ between $1$ and $2n$,  to choose in
an invertible way another FPL $g$ and another 
integer $j$ between $1$ and $2n$ so that $e_i(\pi(g)) = \pi(f)$. The
role of $j$ would be to provide the inversion. We
give in this section such a procedure, which if used correctly, would
yield precisely the correct bijection. The main idea is that of
{\em alternating paths}.

Recall that the FPL of size $n$ can be represented as a subset of the
edges of the two dimensional square lattice whose vertex coordinates
lie between $1$ and $n$ plus some additional edges needed to describe
the alternating boundaries. For the purposes of defining alternating
paths we do not need the additional boundary edges. Let us define the
set of all possible edges: 
$$S = \{ [(i,j),(i+1,j)],[(i,j),(i,j+1)] | \, i,j \in (1,n-1) \},$$
which has cardinality $2n(n-1)$. The FPL is then simply defined by a
set of edges 
$E \subset S$ which has cardinality $n(n-1)$. Then 
one can define the converse FPL by the set $\bar E = S \setminus
E$ and of course $\bar{\bar E} = E$. In Figure~\ref{fig:fpl2}, the
blue lines represent the interior part of the converse FPL.

We define an alternating path by a closed loop $L=[e_1,e_2,\dots,e_m]$
in an FPL in which each edge $e_i$ belongs to the converse of the set
that $e_{i-1}$ belongs to. 
That is each edge is alternately in either $E$ or $\bar E$. This
notion of alternating paths leads immediately to an easy lemma.

\begin{lem} \label{lem:altpath}
Given an FPL of size $n$ by its edges $E$ and given an alternating
path $L$ in the FPL, create a new edge set $E'$ by the following
procedure. For every edge $e$,
\begin{itemize}
\item if $e$ belongs to $E$ and does not belong to $L$, set $e \in E'$;
\item if $e$ belongs to $L$ and does not belong to $E$, set $e \in E'$;
\item else do nothing.
\end{itemize}
This procedure gives a new valid FPL.
\end{lem}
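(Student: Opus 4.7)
The plan is to verify that $E'$ satisfies the defining conditions of an FPL on the $n \times n$ grid: the fixed alternating boundary stubs are present, and every vertex of the grid has total degree exactly two once those stubs are counted. Since the procedure modifies only interior edges (elements of $S$) and leaves every boundary stub untouched, the alternating boundary condition on $E'$ is inherited from $E$ for free. The whole task therefore reduces to checking that the interior degree at every grid vertex is the same in $E'$ as in $E$.

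A quick inspection of the rule defining $E'$ shows that $E'$ is the symmetric difference of $E$ and the edge set of $L$: we delete the edges of $E \cap L$ and adjoin the edges of $L \cap \bar E$. Consequently, at any vertex $v$ the new degree equals $\deg_E(v)$ minus the number of $E$-edges of $L$ at $v$, plus the number of $\bar E$-edges of $L$ at $v$. For a vertex $v$ not on $L$ both correction terms vanish and there is nothing to prove. For a vertex $v$ lying on $L$, I would argue that these two counts coincide, which immediately yields degree preservation at $v$.

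The equality of the two counts follows from a local analysis of the closed alternating walk at $v$. Each time $L$ passes through $v$ it does so via two edges that are consecutive in the cyclic order along $L$, and by the alternating hypothesis one of those two edges lies in $E$ while the other lies in $\bar E$. Grouping all edges of $L$ incident to $v$ according to the visit of $L$ they belong to gives a pairing that matches the $E$-edges of $L$ at $v$ with the $\bar E$-edges of $L$ at $v$, so the two counts are equal. I do not foresee a serious obstacle beyond this bookkeeping; the only subtlety, should $L$ revisit a vertex several times or traverse otherwise complicated geometry, is to insist that the pairing is done visit-by-visit using the cyclic order of $L$ rather than by any local arrangement near $v$. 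Once degree preservation is established at every vertex, $E'$ automatically has degree two everywhere and respects the unchanged boundary, so it defines a valid FPL and the lemma is proved.
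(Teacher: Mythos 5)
Your proof is correct and follows essentially the same route as the paper's: both arguments amount to observing that swapping $E$- and $\bar E$-edges along the closed alternating path pairs, at each passage through a vertex, one edge of $E$ with one edge of $\bar E$, so the degree of every vertex (and the untouched boundary) is preserved. Your version is merely a slightly more formal rendering, via the symmetric difference and visit-by-visit pairing, of the paper's red/blue recoloring argument.
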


\begin{proof}
The idea is very simple. If we color the edge sets $E$ red and $\bar
E$ blue, then what this procedure does is to simply interchange the
colors within the alternating path. The defining property of an FPL is
that every vertex will have two red and two blue edges connected to it
and this procedure does not change that. For the edges at the
boundary, we either have vertices with one red and one blue edge at
the corners or those with one red and two blue edges or those with two
red and one blue edges. At each of these places at most one red and one
blue edge are rearranged and thus the procedure preserves the
arrangement and is therefore a valid FPL.
\end{proof}

\begin{figure}[ht!]
\includegraphics[width=3.5cm]{fpl3eg.eps}  \hfil
\includegraphics[width=3.5cm]{altpatheg.eps} \hfil
\includegraphics[width=3.5cm]{fpl3eg2.eps}  
\caption{An example of an alternating path. Suppose we start with the
  FPL on the left and consider the alternating 
  path of the figure on the right, we end up, after the operation of
  Lemma~\ref{lem:altpath}, with the FPL on the right.}
\label{fig:fpleg}
\end{figure}

We now make the conjecture made at the beginning of this section
more precise.
\begin{conj} \label{conj:bij}
Given an FPL $f$ of size $n$ and an integer $i$ between $1$ and $2n$
there is a canonical algorithm to find an alternating path $L$, which
leads to another FPL $g$ in 
which the paths from $i$ and $i+1$ are connected. This algorithm also
leads to an integer
$j$ which will be needed to find the inverse of this map.
\end{conj}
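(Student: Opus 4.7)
The plan is to give an explicit algorithm built on Lemma~\ref{lem:altpath}. Given $(f,i)$, let $a$ denote the partner of $i$ and $b$ the partner of $i+1$ in $\pi(f)$. If $a=i+1$ already, we output $(f,i)$ unchanged (with $j$ set by convention). Otherwise we must produce an alternating cycle $L$ in $f$ whose flip rearranges the red/blue coloring so that the new FPL $g$ has $i$ and $i+1$ directly connected, and then define an auxiliary index $j$ that records enough data for inversion and for matching $(j,g)$ to an element of $\cB_k$ with $e_j(\pi(g))=\pi(f)$.

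To build $L$, I would start at the boundary half-edge incident to $i$, which lies in the red set $E$ since the red path leaves $i$, and walk through the lattice alternating red and blue edges. Each interior vertex of an FPL is incident to exactly two red and two blue edges, so after traversing a red edge one must choose between two blue continuations; fixing a deterministic local rule (for example, always pivot toward $i+1$, or choose the lexicographically first available direction) makes the walk uniquely determined. By finiteness and reversibility of the rule, the walk must eventually close into a cycle $L$. Applying Lemma~\ref{lem:altpath} along $L$ produces the candidate $g$. The geometric claim to establish is that the local rule can be chosen so $L$ simultaneously uses an edge of the red $i$-to-$a$ path and an edge of the red $(i+1)$-to-$b$ path; the flip then severs both paths and reglues them exactly as needed, producing the connectivity required for $(j,g)\in\cB_k$.

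For invertibility, the role of $j$ is to encode how to re-run the algorithm in reverse: from $(g,j)$ one must recover $L$ (which is also alternating in $g$ because the flip preserves the alternation property) and hence $(f,i)$. The natural choice is to set $j$ equal to the label of the last boundary endpoint visited by the alternating walk before it closes, so that starting from $j$ in $g$ and applying the mirror turning rule retraces $L$ backward and returns $(f,i)$. Making this precise is delicate because flipping along $L$ swaps red and blue \emph{only along $L$}, so the turning rule has to be chosen symmetrically with respect to this partial swap; otherwise the backward walk can diverge from $L$ as soon as it reaches a vertex where the alternation is ambiguous.

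The main obstacle, and where the authors report that computer experiments stall, is proving that the resulting map is a bijection and satisfies the eigenvalue identity~\eqref{conjbij}. Local turning rules are easy to propose and hard to certify; naive greedy choices already fail on small FPLs. I expect the proof will require replacing the greedy local rule with an intrinsic, global characterization of $L$: for example, the unique shortest alternating cycle linking the red $i$-path with the red $(i+1)$-path, or the cycle determined by the gradient of a height function on the FPL. Identifying the right intrinsic $L$ and matching the output to the combinatorics of $\cB_k$ for every $k$ is the essential undone step, and is where the bulk of the work in a complete proof must lie.
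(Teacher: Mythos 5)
The statement you are addressing is not proved in the paper at all: it is stated as Conjecture~\ref{conj:bij}, and the authors explicitly report (Section~\ref{sec:exp}) that they were unable to make it precise. Your proposal does not close it either, and in fact the concrete rules you suggest are exactly the ones the paper documents as failing. You propose to walk from the boundary edge at $i$, alternating red and blue, with a deterministic local tie-breaking rule (``pivot toward $i+1$'' or ``lexicographically first direction''), or alternatively to take the shortest alternating cycle linking the two red paths. The authors report that for $n \geq 5$ both natural versions of this --- the first available alternating path starting at $i$, and the smallest alternating path connecting $i$ to $i+1$ --- are not well-defined: there are FPLs with several candidate paths of equal length, and distinct shortest paths leading to different output FPLs, so no such greedy rule is known to yield a canonical choice, let alone an invertible one. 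Your own last paragraph concedes that identifying the ``right intrinsic $L$'' is the essential undone step; but that step \emph{is} the conjecture, so what you have written is a restatement of the problem rather than a proof.

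Beyond that, several intermediate claims in your sketch are themselves unjustified. (1) Lemma~\ref{lem:altpath} applies to \emph{closed} alternating loops in the interior edge set; a walk launched from the external boundary half-edge at $i$ need not close up through its starting edge --- by finiteness it must revisit a vertex, but the resulting cycle can omit the initial segment (a ``rho''-shaped trajectory), and boundary vertices do not have the two-red/two-blue structure you invoke, so ``the walk must eventually close into a cycle $L$'' containing the $i$-edge does not follow. (2) The ``geometric claim'' that $L$ can be forced to meet both the red $i$-path and the red $(i+1)$-path, and that the flip reglues the connectivity so that $e_i$ applied to $\pi(g)$ recovers $\pi(f)$ (or vice versa --- note the conjecture and the set $\cB_k$ require $e_j(\pi(g))=\pi(f)$, and you should be careful about which direction the flip is supposed to realize), is asserted without argument. (3) Invertibility via the index $j$ is only described aspirationally; as you note yourself, the backward walk can diverge because the flip swaps colors only along $L$, and no symmetric turning rule is exhibited. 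If you want to make progress beyond the paper, the places to look are the dihedral symmetry of Wieland \cite{w}, which the authors suggest for propagating a rule from $(f,i)$ to rotated instances, and the ``catalytic set'' idea of Section~\ref{sec:exp}; but as written your proposal neither proves Conjecture~\ref{conj:bij} nor departs from the paper's (unsuccessful) approach.
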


\section{Experiments} \label{sec:exp}
We have performed computer experiments using Maple to find the
supposed bijection Conjecture~\ref{conj:bij} and present a set of
programs in two packages titled {\em RS} and {\em FPL}. These packages
are available from the homepages of the authors and the \texttt{arXiv}.

The simplest test of the above conjecture is to simply count the
number of times each FPL is the output of the alternating path
procedure. If each FPL occurs exactly $2n$ times, this is a hint that
the algorithm is correct.
Most of the algorithms work for $n=2$ and $n=3$. The first nontrivial
test occurs for $n=4$ where there are 42 FPLs. This case is also
original in the sense that there occurs an FPL with a loop
inside. This forces the algorithm to be nontrivial.

The first comment is that the obvious algorithms for the conjectured
alternating paths do not work. Neither choosing the first available
alternating path (starting from the path beginning at $i$) nor
choosing the smallest alternating path which 
connects $i$ to $i+1$ are well-defined operations. There do arise
examples when $n \geq 5$ in which there are several paths beginning at
the first edge, two of which have the same length. Similarly there are
examples where there are multiple shortest alternating paths which
lead to different final FPLs. 
This problem of
choice in proving the Razumov-Stroganov conjecture has been noticed
before \cite{df2}.

The second comment is that one can use the dihedral symmetry inherent
in the FPL picture proved in \cite{w}. If one has a prescribed
alternating path
algorithm for a certain FPL $f$ and an integer $i$ $(f,i) \to (g,j)$,
then one can use 
the rotation defined in \cite{w} to define the algorithm for
$(f',i+1)$ as $(g',j+1)$. And similarly for the reflected case.

Another possible way out is to introduce ``catalytic'' sets.  It is
very possible that two sets $\cA$ and $\cB$ have the same cardinality
without being in ``natural bijection'', but there exist other sets
$\cC$ and $\cD$, such that it is known that $|\cC|=|\cD|$ and there is
a natural bijection between the Cartesian products $\cA \times \cC$
and $\cB \times \cD$. Similarly, if $\cA \cup \cC$ has a natural
bijection with $\cB \cup \cD$. In other words, one has not to be a
fanatic about ``pure bijective proofs'', but view them merely as yet
another tool that may be combined with other tools of the trade.

\section*{Acknowledgements}
We thank Philippe Di Francesco for telling us about the
  connection to Reference \cite{df2}.


\begin{thebibliography}{99}
\bibitem{mrr} W.H. Mills, D.P. Robbins and H. Rumsey Jr., Alternating
  Sign Matrices and Descending Plane Partitions, {\em
    J. Combin. Th. Ser. A} {\bf 34} (1983), 340--359. 
\bibitem{z1} D. Zeilberger, Proof of the Alternating Sign Matrix
  Conjecture, {\em Electronic J. Combin} {\bf 3} no. 2 (1996), R13, 84pp.
\bibitem{k} G. Kuperberg, Another Proof of the Alternating-Sign Matrix
  Conjecture", {\em Internat. Math. Res. Notes} No. 3 (1996),
  139--150. 
\bibitem{p} J. Propp, The many faces of alternating-sign matrices,
  {\bf preprint}, arXiv:math/0208125.
\bibitem{ngb} B. Nienhuis, J. de Gier and M.T. Batchelor, The quantum
  symmetric XXZ chain at Delta=-1/2, alternating sign matrices and
  plane partitions, {\em J. Phys. A} {\bf 34} (2001), L265--L270.
\bibitem{rs} A.V. Razumov and Yu.G. Stroganov, Spin chains and
  combinatorics, {\em J. Phys. A} {\bf 34} (2001), 3185--3190. \\
A.V. Razumov and Yu.G. Stroganov, Combinatorial nature of
  ground state vector of O(1) loop model, {\em Theor. Math. Phys.} {\bf
    138} (2004), 333--337, \\
A.V. Razumov and Yu.G. Stroganov, O(1) loop model with different
boundary conditions and symmetry classes of alternating-sign matrices,
{\em Theor. Math. Phys.} {\bf 142} (2005), 237--243.
\bibitem{deg} J. de Gier, Loops, matchings and alternating-sign matrices,
{\em Discr. Math.} {\bf 298} (2005), 365--388.
\bibitem{df1} P. Di Francesco, A refined Razumov-Stroganov conjecture,
  {\em J. Stat. Mech.}, P08009 (2004), \\
P. Di Francesco, A refined Razumov-Stroganov conjecture II,
  {\em J. Stat. Mech.}, P11004 (2004), 
\bibitem{zj} P. Zinn-Justin, Proof of Razumov-Stroganov conjecture for
  some infinite families of link patterns, {\bf preprint}, arXiv:math/0607183
\bibitem{df2} P. Di Francesco, Totally Symmetric Self-Complementary
  Plane Partitions and Quantum Knizhnik-Zamolodchikov equation: a
  conjecture, {\em J. Stat. Mech.}, P09008 (2006).
\bibitem{w} B. Wieland, Large Dihedral Symmetry of the Set of
  Alternating Sign Matrices, {\em Electronic
    J. Combin.} {\bf 7} (2000), R37, 13 pp.


\end{thebibliography}
\end{document}